\newcommand{\whirl}{\ensuremath{\mathcal{W}^3}}
\newcommand{\PG}[2]{\mathrm{PG}(#1,#2)}
\newcommand{\mX}{m_{\mathcal{X}}}
\newcommand{\sX}{s_{\mathcal{X}}}
\newcommand{\pX}{p_{\mathcal{X}}}
\newcommand{\rs}[1]{\mathrm{rs}(#1)}
\newcommand{\exlin}[2]{\mathrm{ex}_{\mathrm{lin}}(#1,#2)}
\newcommand{\exlinind}[2]{\mathrm{ex}_{\mathrm{lin}}^{\mathrm{ind}}(#1,#2)}
\newtheorem{theorem}{Theorem}
\newtheorem{conjecture}[theorem]{Conjecture}
\newtheorem{lemma}[theorem]{Lemma}
\newtheorem*{question}{Question}
\author{Jorn van der Pol}
\address{Department of Combinatorics and Optimization, University of Waterloo, Waterloo, Ontario, Canada}
\title{Almost every matroid has an $M(K_4)$- or a \whirl-minor}
\begin{document}
\maketitle

\begin{abstract}
	We show that almost every matroid contains the rank-3 whirl \whirl\ or the complete-graphic matroid $M(K_4)$ as a minor.
\end{abstract}

\section{Introduction}

A matroid is called sparse paving if and only if its nonspanning circuits are hyperplanes as well. Alternatively, a matroid is sparse paving if and only if no nonbasis (a nonspanning set whose cardinality is equal to the rank of the matroid) can be transformed into another by exchanging a single element for another.

Sparse paving matroids, relatively benign objects compared to general matroids, play an important role in matroid enumeration problems, as it is widely believed that almost every matroid is sparse paving (cf.\ \cite{MayhewNewmanWelshWhittle2011} and references therein).
Mayhew, Newman, Welsh and Whittle~\cite{MayhewNewmanWelshWhittle2011} conjectured that sparse paving matroids are ubiquitous in another sense as well, namely that any fixed sparse paving matroid is contained as a minor in almost every matroid.
\begin{conjecture}[Mayhew--Newman--Welsh--Whittle, \cite{MayhewNewmanWelshWhittle2011}]\label{conj:N-minor}
	Let $N$ be a sparse paving matroid. Almost every matroid has an $N$-minor; i.e.\ the fraction of matroids on ground set~$E$ that do not have an $N$-minor tends to~0 as $|E|\to\infty$.
\end{conjecture}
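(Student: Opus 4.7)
The plan is to exploit the (widely believed) fact that the overwhelming majority of matroids are sparse paving, reducing the problem to showing that the number of sparse paving matroids on $E$ with no $N$-minor is asymptotically negligible relative to the total matroid count $s(n)$, which satisfies $\log_2 s(n) = \Theta(\binom{n}{\lfloor n/2\rfloor}/n)$. Concretely, if $f_N(n)$ denotes the number of $N$-minor-free sparse paving matroids on an $n$-element set, it suffices to prove $\log_2 f_N(n) = o(\binom{n}{\lfloor n/2\rfloor}/n)$, since adding the (conjecturally lower-order) contribution of non-sparse-paving matroids does not affect the conclusion.

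Next, I would translate the problem to the Johnson graph $J(n,r)$: sparse paving matroids of rank $r$ on $E$ correspond to stable sets in $J(n,r)$, whose vertices are the $r$-subsets of $E$ and in which two vertices are adjacent when they differ in exactly one element. Under this correspondence, the existence of an $N$-minor becomes a local condition: there is a subset $S\subseteq E$ together with sets to be deleted and contracted so that the induced collection of nonbases matches that of $N$---a fixed, finite template. Thus $N$-minor-free sparse paving matroids correspond to stable sets in $J(n,r)$ that avoid a finite family of local templates.

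With the problem in this form, I would apply a container-style or entropy-compression argument, leveraging the expansion of the Johnson graph and the fact that forbidding a fixed local template restricts the number of locally allowed configurations at every scale. For small $N$ this looks tractable by fairly direct counting; the rank-3 cases $M(K_4)$ and $\whirl$ featuring in the title are especially natural here because a 6-element subset carries a rigid geometry of points and lines that sharply constrains the admissible stable-set extensions. The main obstacle is uniformity in $N$: the conjecture asserts the bound for every fixed sparse paving $N$, and the effective strength of the local constraint varies subtly with the structure of $N$. A plausible route is to handle small $N$ first and then bootstrap inductively, using that forbidding $N$ also forbids every minor of $N$; making this uniform across all sparse paving matroids---without the loss accumulating to the leading order $\binom{n}{\lfloor n/2\rfloor}/n$---is the core technical difficulty I expect to confront.
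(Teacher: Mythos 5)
The statement you were asked to prove is stated in the paper as an \emph{open conjecture}; the paper itself does not prove it. What the paper actually proves is only the special case excluding $\{\whirl, M(K_4)\}$ (Theorem~\ref{thm:main-matroid}), after which the conjecture remains open for general sparse paving $N$. So no proof sketch of the full conjecture can be "the same approach as the paper."

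Beyond that, your very first reduction has a genuine gap. You propose to reduce to counting $N$-minor-free \emph{sparse paving} matroids on the grounds that ``adding the (conjecturally lower-order) contribution of non-sparse-paving matroids does not affect the conclusion.'' But the statement that almost every matroid is sparse paving is itself an open conjecture, so you cannot lean on it. What is actually known is the lower bound $\log m(n) \ge \log s(n) \ge \tfrac{1}{n}\binom{n}{n/2}$ and a matching upper bound on $\log m(n)$ of order $\tfrac{1}{n}\binom{n}{n/2}$; the paper's Lemma~\ref{lemma:entropy-blowup2} is engineered precisely to avoid the reduction you make, by bounding the number of \emph{all} matroids in a contraction-closed class in terms of its low-rank members, so one never needs the unproved ``almost all matroids are sparse paving'' claim. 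If you want to keep your framing, you must bound the count of $N$-minor-free matroids outright (not just the sparse paving ones) against $s(n)$, or invoke something like Lemma~\ref{lemma:entropy-blowup2}.

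Your Johnson-graph/container framing is a reasonable heuristic and loosely echoes how the literature handles individual small $N$ (for rank-3 $N$, the paper reduces to a hypergraph Tur\'an-type problem and invokes the Ruzsa--Szemer\'edi $(6,3)$-theorem rather than containers). But the passage from ``$N$-minor-free'' to a finite family of forbidden local templates in $J(n,r)$ is not automatic: a minor arises from a delete/contract pair, and its footprint on the nonbasis hypergraph is not simply a fixed induced pattern on a bounded vertex set in $J(n,r)$. More importantly, you correctly identify that uniformity over all sparse paving $N$ is the core difficulty, and you offer no mechanism to overcome it; that is not a cosmetic gap but exactly the open problem. As written, the proposal is an outline of a research direction rather than a proof, and it should be presented as such.
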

Various special cases of Conjecture~\ref{conj:N-minor} have been verified; including the case where~$N$ is a uniform matroid~\cite{PendavinghVanderpol2018} or one of the six-element matroids $P_6$, $Q_6$ and $R_6$~\cite{PendavinghVanderpol2015}.\protect\footnote{Throughout this paper, matroid terminology and names for named matroids follow~\cite{Oxley2011}.} In particular, Conjecture~\ref{conj:N-minor} has been verified for each sparse paving matroid of rank~3 on 6~elements, except the whirl \whirl\ and the complete graphic matroid $M(K_4)$ (see Figure~\ref{fig:whirl-k4}). In this paper, we prove that almost every matroid contains at least of these as a minor.
\begin{theorem}\label{thm:main-matroid}
	Almost every matroid has a \whirl- or $M(K_4)$-minor.
\end{theorem}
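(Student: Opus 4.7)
The approach follows the framework established for other cases of Conjecture~\ref{conj:N-minor}: reduce the problem to counting sparse paving matroids without the target minor, encode these as linear 3-uniform hypergraphs with a forbidden configuration, and apply extremal hypergraph theory. Since the number $s_n$ of sparse paving matroids on $[n]$ satisfies $\log_2 s_n = \Omega(\binom{n}{\lfloor n/2\rfloor}/n)$ and existing bounds control $\log_2 m_n$ to within a $\log n$ factor of $\log_2 s_n$, it suffices to show that the number of sparse paving matroids on $[n]$ avoiding both minors is $2^{o(\log_2 m_n)}$; a parallel bound on non-sparse-paving bad matroids (the delicate part) then completes the count.

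Let $M$ be a sparse paving matroid of rank $r \geq 3$ on $[n]$, with nonbasis hypergraph $\mathcal{H} \subset \binom{[n]}{r}$, a linear $r$-uniform hypergraph. For any $(r-3)$-subset $I$ (automatically independent in $M$, since $|I|<r$), the link $\mathcal{H}_I = \{T \in \binom{[n]\setminus I}{3} : I \cup T \in \mathcal{H}\}$ is the nonbasis hypergraph of the rank-3 contraction $M/I$. A short case analysis of linear 3-uniform hypergraphs on six vertices shows that if three of their edges span at most six vertices, they in fact span exactly six and form a $\whirl$-configuration (three pairwise intersections that are three distinct vertices), and that any fourth linear edge on those six vertices extends this uniquely to the Pasch configuration, the nonbasis pattern of $M(K_4)$. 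Consequently, $M$ has a $\whirl$- or $M(K_4)$-minor if and only if some link $\mathcal{H}_I$ contains three edges spanning at most six vertices.

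This last condition is precisely the hypothesis excluded in the Ruzsa--Szemer\'edi $(6,3)$-theorem, which yields $|\mathcal{H}_I| \leq n^2/f(n)$ for some $f(n) = \omega(\log n)$ (from any Behrend-style quantitative bound). Double-counting $(I,T)$ incidences gives $\binom{r}{3}|\mathcal{H}| = \sum_I |\mathcal{H}_I| \leq \binom{n}{r-3}\,n^2/f(n)$, so for the dominant ranks $r = \Theta(n)$ one obtains $|\mathcal{H}| = O(\binom{n}{r}/(n f(n)))$. The number of linear $r$-uniform hypergraphs of that size on $[n]$ is at most $\binom{\binom{n}{r}}{|\mathcal{H}|}$, whose logarithm is $O(\binom{n}{r}\log n/(n f(n))) = o(\binom{n}{r}/n)$, strictly smaller in order than $\log_2 s_n$; summing over ranks preserves this bound. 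The main obstacle I anticipate is handling the non-sparse-paving matroids, for which the sparse paving encoding does not directly apply: one needs either a similar link-and-count analysis carried out in a broader class of matroid encodings, or explicit use of the existing upper bounds relating $m_n$ and $s_n$ to ensure that the non-sparse-paving contribution to the bad count is itself $o(m_n)$.
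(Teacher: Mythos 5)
Your outline runs parallel in spirit to the paper's (reduce to rank 3, invoke Ruzsa--Szemer\'edi, count, and lift to higher ranks), and your link-and-double-count estimate $\binom{r}{3}|\mathcal{H}| \le \binom{n}{r-3}\rs{n-r+3}$ and the $(6,3)$-characterisation of when a rank-3 sparse paving matroid has a $\whirl$- or $M(K_4)$-restriction are both correct. But there are two substantive gaps.

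The first is quantitative. You assert an upper bound $\rs{n} \le n^2/f(n)$ with $f(n) = \omega(\log n)$, attributing it to ``Behrend-style quantitative bounds.'' Behrend's construction gives a \emph{lower} bound on $\rs{n}$ (it shows the exponent~2 in the $(6,3)$-theorem cannot be lowered), not an upper bound; the best known upper bounds, coming from refinements of the triangle removal lemma, have $f(n)$ growing far more slowly than $\log n$ (on the order of $2^{\Theta(\log^* n)}$). With that, your estimate $\log\binom{\binom{n}{r}}{|\mathcal{H}|} = O\bigl(\tfrac{\binom{n}{r}}{n}\cdot\tfrac{\log(nf(n))}{f(n)}\bigr)$ is \emph{not} $o(\binom{n}{r}/n)$, and your direct count of rank-$r$ nonbasis hypergraphs does not close, even in the sparse paving case. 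The paper sidesteps this: at rank 3 it encodes a Ruzsa--Szemer\'edi hypergraph $H$ by its 2-shadow $\partial H$, using the fact that every edge of $\partial H$ lies in a unique triangle (so $H \mapsto \partial H$ is injective); since $\partial H$ is a graph with at most $3\rs{n}$ edges, the count $\sum_{i \le 3\rs{n}}\binom{\binom{n}{2}}{i} \le 2^{H(3\rs{n}/\binom{n}{2})\binom{n}{2}} = 2^{o(n^2)}$ needs only the qualitative $\rs{n} = o(n^2)$. The factor-of-$n$ saving from describing a subgraph of $\binom{[n]}{2}$ rather than of $\binom{[n]}{3}$ is exactly what removes the need for a stronger quantitative bound. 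For ranks $r > 3$, the paper does not re-run a double count: it applies Lemma~\ref{lemma:entropy-blowup2}, which transfers the rank-3 bound to the bound $\mX(n) = o(s(n))$ in one stroke.

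The second gap is the non-sparse-paving contribution, which you flag but do not resolve, and which cannot be dismissed by comparing $\log m(n)$ with $\log s(n)$: a bound of the form $\sX(n) = o(s(n))$ gives no control over the non-sparse-paving matroids in $\mathcal{X}$. The paper settles this at rank~3. It first encodes each rank-3 paving matroid in $\mathcal{X}$ by a pair of rank-3 sparse paving matroids in $\mathcal{X}$ (Theorem~\ref{thm:paving-encoding} and Lemma~\ref{lemma:paving-whirl-k4-free}), giving $\pX(n,3) \le \sX(n,3)^2$, and then describes an arbitrary rank-3 matroid by its paving simplification together with an ordered partition of $[n]$ into loops and parallel classes, at a cost of only $n^{n+1} = 2^{o(n^2)}$ extra choices, yielding $\mX(n,3) = 2^{o(n^2)}$. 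Applying Lemma~\ref{lemma:entropy-blowup2} to the contraction-closed class $\mathcal{X}$ then completes the proof. Both of these encoding steps are what your proposal is missing.
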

\begin{figure}[h]\centering
	\subcaptionbox{The whirl $\whirl$.}[.4\linewidth]{\includegraphics[width=3cm]{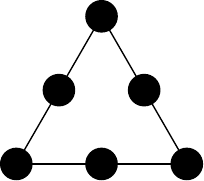}}
	\subcaptionbox{The graphic matroid $M(K_4)$.}[.4\linewidth]{\includegraphics[width=3cm]{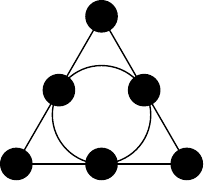}}
	\caption{\label{fig:whirl-k4}The two matroids that appear in Theorem~\ref{thm:main-matroid}.}
\end{figure}

At the heart of the proof of Theorem~\ref{thm:main-matroid} lies an analysis of the number of rank-3 matroids that do not have \whirl\ or $M(K_4)$ as a restriction.
\begin{theorem}\label{thm:main-rank3}
	The number of rank-3 $\{\whirl, M(K_4)\}$-free matroids on ground set~$[n]$ is $2^{o(n^2)}$.\protect\footnote{The set $[n]$ denotes $\{1,2,\ldots,n\}$. Asymptotic notation will always refer to the regime $n\to\infty$. Logarithms are taken with respect to base 2.}
\end{theorem}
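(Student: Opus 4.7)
The plan is a structural reduction followed by an extremal/enumerative argument.

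First, I would reduce to simple rank-3 matroids. Every rank-3 matroid on $[n]$ arises from a simple rank-3 matroid on some $[k]$ (with $k \leq n$) by adjoining loops and partitioning $[n]$ into parallel classes; this contributes a factor of at most $(n+1)^n = 2^{O(n\log n)} = 2^{o(n^2)}$. So it suffices to count simple $\{\whirl, M(K_4)\}$-free rank-3 matroids on $[n]$.

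Next, I would establish the structural characterization: calling a rank-2 flat of size at least~3 a \emph{long line}, a simple rank-3 matroid is $\{\whirl, M(K_4)\}$-free if and only if no three of its long lines pairwise meet in three distinct points. For the forward direction, three such lines $L_1, L_2, L_3$ with pairwise intersections $p_{12}, p_{13}, p_{23}$ all distinct, together with any points $a_i \in L_i \setminus \{p_{ij}, p_{ik}\}$, form a 6-element restriction equal to $M(K_4)$ or $\whirl$ depending on whether $\{a_1, a_2, a_3\}$ is dependent. The reverse follows by inspecting $\whirl$ and $M(K_4)$, each of which already contains three long lines in exactly this configuration.

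With the characterization in hand, I would pass to an extremal bound and then a count. Picking a representative 3-subset $T(L) \subseteq L$ for every long line $L$, the family $\{T(L) : L \in \mathcal{L}\}$ is a 3-uniform linear hypergraph with no linear triangle, so the Ruzsa--Szemer\'edi theorem (equivalently, the Brown--Erd\H{o}s--S\'os $(6,3)$-bound) gives $|\mathcal{L}| = o(n^2)$. To upgrade this extremal bound to $2^{o(n^2)}$ matroids, I would invoke a hypergraph container / Frankl--R\"odl-type enumeration to bound the number of linear-triangle-free 3-uniform linear hypergraphs on $[n]$ by $2^{o(n^2)}$; and then control the ``extension'' of such a hypergraph to a full matroid (accounting for long lines of size $\geq 4$ and for points lying on no representative triple) by a factor of $2^{O(n\log n)}$, exploiting that each non-pencil-centre point lies on at most one long line.

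The main obstacle is this final counting step. The extremal input $|\mathcal{L}| = o(n^2)$ alone does not yield $2^{o(n^2)}$ matroids, since naively encoding each long line as a subset of $[n]$ costs $\log n$ bits per point-line incidence and so $o(n^2 \log n)$ bits overall. Shaving the $\log n$ factor requires an enumerative (rather than purely extremal) input, either via the bipartite incidence graph between pencil centres and long lines --- which has girth at least $8$, since $4$-cycles are barred by linearity and $6$-cycles are exactly linear triangles --- or via an appropriate container-theoretic argument for linear-triangle-free families. I expect this to be the technical heart of the proof.
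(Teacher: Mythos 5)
Your high-level strategy — strip loops and parallel classes (cost $2^{O(n\log n)}$), characterise $\{\whirl, M(K_4)\}$-freeness of the simplification as ``no three long lines pairwise meeting in three distinct points,'' connect to Ruzsa--Szemer\'edi hypergraphs, and then count — is essentially the paper's, and the structural characterisation you state is correct. You also correctly identify where the real work is: the extremal bound $|\mathcal{L}| = o(n^2)$ by itself does not bound the number of matroids. But the two devices you propose to close that gap do not yet do the job, and one of them looks genuinely doubtful.

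For the sparse-paving count (your step (A)), citing a container theorem is workable, but the paper does something much lighter: the map sending a Ruzsa--Szemer\'edi hypergraph $H$ to its $2$-shadow $\partial H$ is injective (each edge of $\partial H$ lies in a unique triangle, precisely because $H$ has no linear triangle), and $\partial H$ has at most $3\,\rs{n} = o(n^2)$ edges, so the count is at most $\sum_{i \le 3\rs{n}}\binom{\binom{n}{2}}{i} = 2^{o(n^2)}$. No container machinery is needed.

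For the ``extension'' step (your (B)), the claimed $2^{O(n\log n)}$ factor is not justified as written. Your argument handles points lying on at most one long line, but a point $p$ that lies on no representative triple $T(L)$ can still be the centre of arbitrarily many long lines (all through $p$, which creates no forbidden triangle). Encoding, for each such $p$, the subset of $\mathcal{L}$ of lines through $p$ costs up to $\Theta(\log|\mathcal{L}|)$ bits per incidence, and the total number of incidences is only known to be $o(n^2)$, so the naive cost is $o(n^2\log n)$ — not $o(n^2)$. The girth-$\ge 8$ observation about the bipartite point--line incidence graph is correct but, on its own, does not obviously recover the missing $\log n$ factor. The paper avoids this extension argument entirely: it uses the Pendavingh--Van der Pol encoding, which writes any rank-$3$ paving matroid $M$ injectively as a pair $(M^0, M^1)$ of rank-$3$ sparse paving matroids (formed from the even- and odd-indexed consecutive triples along each hyperplane), and proves (Lemma~\ref{lemma:paving-whirl-k4-free}) that if $M$ is $\{\whirl, M(K_4)\}$-free then so are $M^0$ and $M^1$. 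This gives $\pX(n,3) \le \sX(n,3)^2$ directly, with nothing left to encode. If you want to keep your representative-triple approach, you would need a genuine argument that the pencil-centre incidences can be encoded in $o(n^2)$ bits; as it stands, that is a gap, not a technicality.
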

A version of Theorems~\ref{thm:main-matroid} and~\ref{thm:main-rank3} for sparse paving matroids was proved earlier on the Matroid Union blog~\cite{Vanderpol2020}.

The remainder of this paper is structured as follows. In Section~\ref{sec:enum}, we review known results about matroid enumeration and prove our main technical tool, which allows us to bound the size of a class of matroids in terms of its low-rank members. In Section~\ref{sec:s}, we prove the sparse paving versions of Theorems~\ref{thm:main-matroid} and~\ref{thm:main-rank3}, after which both theorems are proved in Section~\ref{sec:proofs}. Finally, in Section~\ref{sec:fano}, we comment on replacing $M(K_4)$ by $F_7$ in the two main theorems of this paper.

\section{\label{sec:enum}Matroid enumeration}

In this section, we review known results on matroid enumeration and prove a lemma which allows us to conclude that a contraction-closed class~$\mathcal{M}$ of matroids is ``small'' based on the number of rank-$s$ matroids that it contains.

\subsection{Notation}

Throughout this paper, $m(n)$ and $s(n)$ denote the number of matroids and sparse paving matroids on ground set~$[n]$, respectively. Similarly, $m(n,r)$ and $s(n,r)$ are the number of matroids (sparse paving matroids) on ground set~$[n]$ of rank~$r$.

When talking about classes of matroids, we will use subscripts for the corresponding numbers in this class; for example, $s_{\mathcal{M}}(n,r)$ denotes the number of rank-$r$ sparse paving matroids on ground set~$[n]$ in the class~$\mathcal{M}$.

We say that \emph{almost every matroid} satisfies property~$P$ if the class~$\mathcal{M}$ of matroids that do not satisfy~$P$ is small in the sense that $m_{\mathcal{M}}(n) = o(m(n))$; in practice, this often means showing that $m_{\mathcal{M}}(n) = o(s(n))$.

In this paper, the main class of interest is $\mathcal{M} = \mathcal{X}$, where $\mathcal{X} = \text{Ex}(\whirl, M(K_4))$ is the class of $\{\whirl, M(K_4)\}$-free matroids; thus, Theorem~\ref{thm:main-matroid} is equivalent to the statement that $\mX(n) = o(m(n))$, while Theorem~\ref{thm:main-rank3} states that $\mX(n,3) = 2^{o(n^2)}$.

\subsection{Lower bound}

The following lower bound on the number of (sparse paving) matroids on ground set~$[n]$ is due to a construction by Graham and Sloane~\cite{GrahamSloane1980}.
\begin{lemma}\label{lemma:s-lowerbound}
	$\log m(n) \ge \log s(n) \ge \frac{1}{n}\binom{n}{n/2}$.
\end{lemma}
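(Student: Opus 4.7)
The plan is to reproduce the classical Graham--Sloane construction, exploiting the bijection between sparse paving matroids and stable sets in the Johnson graph.

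First, I would observe that a sparse paving matroid of rank~$r$ on $[n]$ is uniquely determined by its collection of nonbases, and that a family $\mathcal{N}\subseteq\binom{[n]}{r}$ is the nonbasis family of some sparse paving matroid if and only if it is a stable set in the Johnson graph $J(n,r)$ (whose edges join pairs of $r$-subsets with symmetric difference~$2$). Indeed, the sparse paving condition is exactly that no two nonbases differ by a single element exchange. Consequently, to lower-bound $s(n,r)$ it suffices to exhibit a large stable set in $J(n,r)$, since every subfamily of it yields a distinct sparse paving matroid.

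Next I would construct the stable set via the Graham--Sloane coloring. For each $S\in\binom{[n]}{r}$, let $\chi(S)=\sum_{i\in S}i \pmod{n}$. If $S$ and $T$ are adjacent in $J(n,r)$, say $T=(S\setminus\{i\})\cup\{j\}$ with $i\ne j$ in $[n]$, then $\chi(T)-\chi(S)\equiv j-i\pmod{n}$, and since $1\le i,j\le n$ and $i\ne j$, this difference is nonzero modulo~$n$. Hence each color class $\chi^{-1}(c)$ is stable in $J(n,r)$. By pigeonhole, some color class has size at least $\binom{n}{r}/n$. Every subset of such a class gives a sparse paving matroid of rank~$r$, so
\[
    s(n,r) \;\ge\; 2^{\binom{n}{r}/n}.
\]
Choosing $r=\lfloor n/2\rfloor$ and using $s(n)\ge s(n,r)$ and $m(n)\ge s(n)$ finishes the proof.

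There is no genuine obstacle here; the argument is a standard pigeonhole after setting up the right coloring. The only thing one must be slightly careful about is insisting that the residues be taken modulo~$n$ (not $2n$ or something else) so that the nonzero difference $j-i$ with $i,j\in[n]$ cannot vanish; and that one counts $2^{|\chi^{-1}(c)|}$ rather than $|\chi^{-1}(c)|$ matroids, since any subfamily of a stable set of nonbases defines a valid sparse paving matroid.
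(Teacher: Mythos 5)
Your proposal is correct and is precisely the Graham--Sloane argument that the paper cites for this lemma (the paper gives no proof of its own, only the reference~\cite{GrahamSloane1980}). The bijection between rank-$r$ sparse paving matroids and stable sets of the Johnson graph $J(n,r)$, the modular colouring $\chi(S)=\sum_{i\in S}i\bmod n$ showing adjacent vertices get distinct colours, the pigeonhole step producing a colour class of size at least $\binom{n}{r}/n$, and the observation that every subfamily of a stable set is again a stable set (hence a sparse paving matroid) are all exactly right; specialising to $r=\lfloor n/2\rfloor$ then gives the bound.
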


\subsection{Bounding a class in terms of its rank-$s$ members}

The following lemma is a straightforward extension of the main technical result in~\cite{BansalPendavinghVanderpol2014}, and first appeared on the Matroid Union blog~\cite{PendavinghVanderpol2016}.
\begin{lemma}\label{lemma:entropy-blowup1}
	Let $\mathcal{M}$ be a class of matroids that is closed under contraction. For all $t \le r \le n$,
	\begin{equation*}
		\frac{\log(1 + m_\mathcal{M}(n,r))}{\binom{n}{r}} \le \frac{\log(1+m_\mathcal{M}(n-t,r-t))}{\binom{n-t}{r-t}}.
	\end{equation*}
\end{lemma}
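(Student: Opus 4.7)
The plan is to apply Shearer's entropy inequality to a uniform element of an augmented basis family. Identify each $M \in \mathcal{M}_r(n)$ with the indicator vector $X \in \{0,1\}^{\binom{[n]}{r}}$ of its set of bases, and include the all-zero vector $\mathbf{0}$ as an additional, dummy element. Letting $M$ be uniform on $\mathcal{M}_r(n) \cup \{\mathbf{0}\}$ gives $H(X) = \log(1 + m_\mathcal{M}(n,r))$, which matches the numerator on the left-hand side of the target inequality.

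For each $T \in \binom{[n]}{t}$, I would consider the marginal $X_T$ obtained by restricting $X$ to the coordinates $B$ with $T \subseteq B$. The central observation is that $X_T$ is determined by the contraction $M/T$: if $T$ is dependent in $M$, or if $M = \mathbf{0}$, then $X_T$ is the zero vector; otherwise, identifying $B$ with $B \setminus T$, the vector $X_T$ is precisely the basis indicator of $M/T$, a rank-$(r-t)$ matroid on $[n]\setminus T$ which lies in $\mathcal{M}$ by contraction-closedness. Counting the zero vector once and relabeling the ground set, $X_T$ takes at most $1 + m_\mathcal{M}(n-t, r-t)$ values, so $H(X_T) \le \log(1 + m_\mathcal{M}(n-t, r-t))$.

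To combine these marginal bounds, I would invoke Shearer's entropy inequality. Each coordinate $B \in \binom{[n]}{r}$ of $X$ appears in $X_T$ for exactly the $\binom{r}{t}$ subsets $T \subseteq B$ of size $t$, so the family $\{X_T : T \in \binom{[n]}{t}\}$ covers every coordinate $\binom{r}{t}$ times. Shearer then gives
\begin{equation*}
\binom{r}{t}\, H(X) \;\le\; \sum_{T \in \binom{[n]}{t}} H(X_T) \;\le\; \binom{n}{t} \log(1 + m_\mathcal{M}(n-t, r-t)),
\end{equation*}
and the statement follows by dividing both sides by $\binom{n}{r}\binom{r}{t} = \binom{n}{t}\binom{n-t}{r-t}$. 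There is no real obstacle in this argument; the only subtlety worth highlighting is the role of the dummy element $\mathbf{0}$. Without it, the dependent $T$ would still contribute the zero vector to $X_T$, forcing a ``$+1$'' only on the right-hand side and producing a strictly weaker inequality with $\log m_\mathcal{M}(n,r)$ on the left; augmenting at the outset restores the symmetric ``$+1$'' that the statement of the lemma requires.
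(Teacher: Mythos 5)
Your proof is correct, and the Shearer-based argument is exactly the entropy technique behind the cited result: the paper does not include a proof of this lemma but instead refers to Bansal--Pendavingh--van der Pol (2014) and the Matroid Union blog post, where the argument is precisely of the form you give (project the basis-indicator vector onto the $r$-sets containing a fixed $t$-set, observe that the projection is governed by the contraction $M/T$, and apply Shearer with cover multiplicity $\binom{r}{t}$). Your handling of the dummy all-zero vector and your explanation of why it yields the symmetric $1+m_\mathcal{M}$ on both sides are both sound.
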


Let $s \ge 3$ be a fixed integer. Applying Lemma~\ref{lemma:entropy-blowup1} with $t = r-s$ provides an upper bound on $m_\mathcal{M}(n,r)$ in terms of $m_\mathcal{M}(n-r+s,s)$. Note that the upper bound provided by the next lemma is in terms of~$s(n)$; this facilitates use of the lemma to prove statements of the form ``almost every sparse paving matroid satisfies~$P$'' as well as those of the form ``almost every matroid satisfies~$P$''.
\begin{lemma}\label{lemma:entropy-blowup2}
	Let $\mathcal{M}$ be a class of matroids that is closed under contraction, and let $s \ge 3$ be an integer. If there exist $c < 1/2$ and $n_0$ such that $\log m_{\mathcal{M}}(n,s) \le \frac{c}{n}\binom{n}{s}$ for all $n \ge n_0$, then $m_{\mathcal{M}}(n) = o(s(n))$; in particular, then almost every matroid is not in $\mathcal{M}$.
\end{lemma}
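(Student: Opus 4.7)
The plan is to apply Lemma~\ref{lemma:entropy-blowup1} with $t = r - s$ at each rank $r \ge s$, converting the rank-$s$ hypothesis into a bound on $m_{\mathcal{M}}(n, r)$, and then to sum over $r$ and compare against the Graham--Sloane lower bound $\log s(n) \ge \binom{n}{n/2}/n$ from Lemma~\ref{lemma:s-lowerbound}. Since $m_{\mathcal{M}}(n) \le (n+1)\max_r m_{\mathcal{M}}(n, r)$, it suffices to bound each $\log m_{\mathcal{M}}(n, r)$, as the extra $\log(n+1)$ is negligible against an $\Omega(2^n/n^{3/2})$ comparison.

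I would split the ranks into two regimes. For the extreme ranks $r < s$ or $r > n - n_0 + s$ the trivial bound $m_{\mathcal{M}}(n, r) \le 2^{\binom{n}{r}}$ suffices: in these regimes $\binom{n}{r}$ is polynomial in~$n$ (since $s$ and $n_0$ are fixed), so $\log m_{\mathcal{M}}(n, r)$ is polynomial in~$n$, negligible compared to $\binom{n}{n/2}/n$. For the middle ranks $s \le r \le n - n_0 + s$, Lemma~\ref{lemma:entropy-blowup1} with $t = r-s$, combined with the hypothesis applied at $n' = n - r + s \ge n_0$, yields
\[
\log m_{\mathcal{M}}(n, r) \;\le\; \frac{\binom{n}{r}}{\binom{n-r+s}{s}} + \frac{c\binom{n}{r}}{n-r+s},
\]
where I use $\log(1+x) \le 1 + \log x$ for $x \ge 1$ to absorb the ``$+1$'' appearing inside the logarithm in Lemma~\ref{lemma:entropy-blowup1}.

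Routine binomial estimates show that the dominant second term is maximized near $r = n/2$, with value $(2c + o(1))\binom{n}{n/2}/n$, while the first term is $O(\binom{n}{n/2}/n^{s})$ throughout, i.e.\ $o(\binom{n}{n/2}/n)$ since $s \ge 3$. Thus $\log m_{\mathcal{M}}(n) \le (2c + o(1))\binom{n}{n/2}/n$, and combined with Lemma~\ref{lemma:s-lowerbound},
\[
\log \frac{m_{\mathcal{M}}(n)}{s(n)} \;\le\; \frac{(2c - 1 + o(1))}{n}\binom{n}{n/2} \;\longrightarrow\; -\infty,
\]
since $2c - 1 < 0$ by hypothesis and $\binom{n}{n/2}/n \to \infty$. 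Therefore $m_{\mathcal{M}}(n) = o(s(n))$, and because $s(n) \le m(n)$, also $m_{\mathcal{M}}(n) = o(m(n))$, as required. The main obstacle is locating and estimating the maximum of $\binom{n}{r}/(n-r+s)$: the denominator is asymmetric in $r$, but a direct ratio test shows the critical point is $r \approx (n + s - 1)/2$, shifted by only $O(1)$ from $n/2$, so the maximum value is $(2 + o(1))\binom{n}{n/2}/n$ as claimed.
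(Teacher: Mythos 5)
Your proof is correct, and it takes a genuinely different route from the paper's for the ``tail'' ranks. The paper restricts the entropy-blowup lemma to the narrow band $R_n = [n/2 - \sqrt{n},\, n/2 + \sqrt{n}]$, and disposes of all other ranks by invoking Theorem~16 of Pendavingh--Van der Pol (2015b), which asserts that $\sum_{r \notin R_n} m(n,r) = o(s(n))$ --- i.e.\ the count of \emph{all} matroids whose rank is more than $\sqrt{n}$ from $n/2$ is already negligible. You instead push the entropy-blowup bound over the entire range $s \le r \le n - n_0 + s$ where the hypothesis is applicable, and then do the extra binomial work to show that the resulting per-rank bound $\binom{n}{r}/\binom{n-r+s}{s} + c\binom{n}{r}/(n-r+s)$ peaks near $r \approx n/2$ at $(2c + o(1))\binom{n}{n/2}/n$, with the first (``$+1$''-absorption) term contributing only $O\bigl(\binom{n}{n/2}/n^s\bigr)$ and the extreme ranks $r < s$, $r > n - n_0 + s$ killed by the trivial $2^{\binom{n}{r}}$ bound since $\binom{n}{r}$ is polynomial there. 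The trade-off: your argument is self-contained and avoids the external citation, but requires locating the maximum of $\binom{n}{r}/(n-r+s)$ and verifying that the bound decays fast enough in the intermediate regime; the paper's route sidesteps that analysis entirely by confining attention to a $\sqrt{n}$-window where $\binom{n}{r} \le \binom{n}{n/2}$ and $n - r + s \ge n/2 - \sqrt{n}$ give the estimate in one line, at the price of pulling in the concentration-of-rank result as a black box.
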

\begin{proof}
	Let $c'$ and $c''$ be such that $c < c' < c'' < 1/2$. For positive integers $n$, define $R_n = \left[\frac{n}{2}-\sqrt{n}, \frac{n}{2}+\sqrt{n}\right] \cap \mathbb{Z}$ and $R_n^c = \{0, 1, \ldots, n\}\setminus R_n$. Let $n \ge n_0$ be so large that $\frac{n}{2}-\sqrt{n} > s$, $c'/(1-2/\sqrt{n}) < c''$, and
	\begin{equation*}
		1 + 2^{\frac{c}{n-r+1}\binom{n-r+s}{s}}
			\le 2^{\frac{c'}{n-r+s}\binom{n-r+s}{s}}
			\qquad\text{for all $r \in R_n$}.
	\end{equation*}
	
	For $r \in R_n$, an application of Lemma~\ref{lemma:entropy-blowup1} gives
	\begin{equation}\label{eq:entropy-blowup2-1}
		\log m_\mathcal{M}(n,r)
			\le \frac{c'}{n-r+s}\binom{n}{r}
			\le \frac{c'}{\frac{n}{2} - \sqrt{n}}\binom{n}{n/2}
			\le \frac{2c''}{n}\binom{n}{n/2}.
	\end{equation}
	
	By~\cite[Theorem~16]{PendavinghVanderpol2015b} (and the observation that $m_{\mathcal{M}}(n,r) \le m(n,r)$),
	\begin{equation}\label{eq:entropy-blowup2-2}
		\sum_{r \in R_n^c} m_{\mathcal{M}}(n,r) \le \sum_{r \in R_n^c} m(n,r) = o(s(n)).\protect\footnote{Although~\cite[Theorem~16]{PendavinghVanderpol2015b} states only that $\sum_{r \in R_n^c} m(n,r) = o(m(n))$, its proof implies the statement that is used here.}
	\end{equation}
	
	Combining~\eqref{eq:entropy-blowup2-1} and~\eqref{eq:entropy-blowup2-2},
	\begin{equation*}
		\begin{array}{*7{@{\,}>{\displaystyle}c@{\,}}}
			m_{\mathcal{M}}(n)
				&=& \sum_{r \in R_n} m_{\mathcal{M}}(n,r)
					&+& \sum_{r \in R_n^c} m_{\mathcal{M}}(n,r) \\
				&\le& (2\sqrt{n}+1) 2^{\frac{1-\delta}{n}\binom{n}{n/2}} &+& o(s(n)) &=& o(s(n)),
		\end{array}
	\end{equation*}
	where the final step follows from the lower bound on $s(n)$ from Lemma~\ref{lemma:s-lowerbound}.
\end{proof}

\section{\label{sec:s}Sparse paving matroids without $M(K_4)$- or \whirl-minor}

In this section, we prove the sparse paving versions of Theorems~\ref{thm:main-matroid} and~\ref{thm:main-rank3}. The contents of this section appeared before on the Matroid Union blog~\cite{Vanderpol2020}.
\begin{theorem}\label{thm:sX}
	Almost every sparse paving matroid contains \whirl\ or $M(K_4)$ as a minor, i.e.\ $\sX(n) = o(s(n))$.
\end{theorem}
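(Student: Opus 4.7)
The plan is to apply Lemma~\ref{lemma:entropy-blowup2} to the class $\mathcal{M}$ of sparse paving matroids lying in $\mathcal{X}$, which is contraction-closed: sparse pavingness is preserved under contraction, since two nonbases $N_1 = X\cup\{f\}$, $N_2 = X\cup\{g\}$ of $M/e$ would lift to two nonbases $N_1\cup\{e\}, N_2\cup\{e\}$ of $M$ differing by a single exchange, and $\mathcal{X}=\mathrm{Ex}(\whirl,M(K_4))$ is minor-closed by definition. Taking $s=3$ in the lemma reduces the theorem to establishing $\log \sX(n,3) \le \tfrac{c}{n}\binom{n}{3}$ for some $c<1/2$; in practice I will aim for the stronger estimate $\log \sX(n,3) = o(n^2)$.

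The next step is a translation into hypergraph language. For $n\geq 4$, a rank-3 sparse paving matroid on $[n]$ is simple (otherwise two nonbases would differ by a single exchange), and is determined by its set of nonbases, which form a linear 3-uniform hypergraph (a partial Steiner triple system) $\mathcal{H}$ on $[n]$; moreover, every partial STS arises this way. A rank-3 minor of a rank-3 matroid is a restriction, so $M$ has a $\whirl$- or $M(K_4)$-minor iff some 6-element $S\subseteq[n]$ satisfies $M|S\in\{\whirl, M(K_4)\}$. A short case analysis of partial STSs supported on exactly six vertices shows that, up to isomorphism, the only three-triple and four-triple configurations are respectively the nonbasis pattern of $\whirl$ (three triples whose pairwise intersections are three distinct singletons) and of $M(K_4)$ (those three triples together with a ``completing'' fourth). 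Hence $M$ is $\mathcal{X}$-free iff $\mathcal{H}$ contains no three triples whose pairwise intersections are three distinct singletons---equivalently, no six vertices of $\mathcal{H}$ support three or more triples.

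This is exactly the classical \emph{$(6,3)$-configuration}, and the theorem of Ruzsa and Szemer\'edi gives $\mathrm{ex}(n,(6,3)) = o(n^2)$. To promote this extremal bound into a counting bound I would invoke the hypergraph container method of Balogh-Morris-Samotij / Saxton-Thomason, applied to the auxiliary 3-graph on vertex set $\binom{[n]}{3}$ whose hyperedges are loose triangles: verifying the degree/co-degree conditions produces a family of $2^{o(n^2)}$ containers, each containing at most $o(n^2)$ triples of any $\mathcal{X}$-free partial STS. Summing $2^{o(n^2)}$ subsets per container yields $\sX(n,3) = 2^{o(n^2)}$, and feeding this into Lemma~\ref{lemma:entropy-blowup2} gives $\sX(n) = o(s(n))$.

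The main obstacle is precisely the final step: passing from the Ruzsa-Szemer\'edi extremal bound to a $2^{o(n^2)}$ count. The naive ``choose at most $\varepsilon n^2$ triples from $\binom{[n]}{3}$'' estimate only gives $2^{O(n^2\log n)}$, so one genuinely needs either the container machinery, a supersaturation/removal-style argument, or an efficient encoding tailored to loose-triangle-free partial STSs. Everything else---the reduction to rank $3$, the sparse paving/partial STS bijection, and the matroidal identification of the two 6-element configurations---is routine case analysis.
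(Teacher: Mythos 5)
Your overall strategy matches the paper's: reduce through Lemma~\ref{lemma:entropy-blowup2} (taking $s=3$) to bounding $\sX(n,3)$, translate rank-$3$ $\{\whirl, M(K_4)\}$-free sparse paving matroids into linear $3$-uniform hypergraphs in which no six vertices span three or more edges, and observe that the crux is upgrading the extremal bound $\rs{n} = o(n^2)$ to the counting bound $\sX(n,3) = 2^{o(n^2)}$. Where you diverge is in that last step. You propose the hypergraph container method, which is plausible but heavy and would require you to establish a supersaturation statement for loose triangles in linear $3$-graphs before the machinery engages; the paper instead uses an elementary injective encoding via the $2$-shadow. Given a Ruzsa--Szemer\'edi hypergraph $H$, let $\partial H$ be the graph on $[n]$ whose edges are the pairs covered by some triple of $H$. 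Because $H$ contains no linear $3$-cycle, every edge of $\partial H$ lies in a \emph{unique} triangle of $\partial H$, so $H$ can be reconstructed from $\partial H$ and the map $H \mapsto \partial H$ is injective. Moreover $\partial H$ has at most $3\rs{n} = o(n^2)$ edges, drawn from only $\binom{n}{2}$ slots, so
\begin{equation*}
\sX(n,3) \;\le\; \sum_{i \le 3\rs{n}} \binom{\binom{n}{2}}{i} \;=\; 2^{o(n^2)}.
\end{equation*}
This is precisely the point you flagged as the obstacle: passing from the shadow replaces ``choose $o(n^2)$ triples from $\Theta(n^3)$ candidates'' (which only yields $2^{O(n^2\log n)}$) with ``choose $o(n^2)$ pairs from $\Theta(n^2)$ candidates,'' and the binary-entropy estimate then gives $2^{o(n^2)}$ outright, with no containers, supersaturation, or regularity beyond the $(6,3)$-theorem itself. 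Your container route should also work in principle, but it buys nothing here and introduces technical prerequisites the paper avoids.
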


Our starting point will be sparse paving matroids of rank~3 on ground set~$[n]$, where we assume that $n \ge 4$. Pairs of circuit-hyperplanes of such matroids intersect in at most one point. Conversely, any collection of 3-element subsets of~$[n]$ that pairwise intersect in at most one point forms the collection of circuit-hyperplanes of a rank-3 sparse paving matroid. Thus, sparse paving matroids of rank-3 can be thought of as linear 3-uniform hypergraphs.

A rank-3 sparse paving matroid is $\{\whirl, M(K_4)\}$-free if and only if its corresponding hypergraph has the property that the subgraph induced by any six vertices spans at most two edges (equivalently, such a hypergraph does not contain a linear 3-cycle, i.e.\ the hypergraph corresponding to \whirl, as a subgraph). We call such a hypergraph a \emph{Ruzsa--Szemer\'edi hypergraph}, and write $\rs{n}$ for the maximum number of edges in a Rusza--Szemer\'edi hypergraph on $n$ vertices. The following result is known as the $(6,3)$-theorem.
\begin{theorem}[Ruzsa--Szemer{\'e}di, \cite{RuzsaSzemeredi1978}]\label{thm:RS}
	$\rs{n} = o(n^2)$.
\end{theorem}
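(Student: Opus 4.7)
The plan is to reduce Theorem~\ref{thm:RS} to the triangle removal lemma. Given a Ruzsa--Szemer\'edi hypergraph $H$ on $[n]$ with $m$ edges, I would build the auxiliary graph $G$ on the same vertex set by replacing each hyperedge $\{a,b,c\}$ with a triangle on those three vertices; formally, $E(G) = \{ab, bc, ca : \{a,b,c\} \in E(H)\}$. Since $H$ is linear (any two circuit-hyperplanes of a rank-$3$ sparse paving matroid meet in at most one element), $G$ has exactly $3m$ edges, and the $m$ triangles coming from hyperedges of $H$ are pairwise edge-disjoint.

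Next I would show that the only triangles of $G$ are those corresponding to hyperedges of $H$. Suppose $\{a,b,c\}$ spans a triangle in $G$ but is not itself a hyperedge. Each of $ab,bc,ca$ comes from a hyperedge, say $e_1 = \{a,b,x\}$, $e_2 = \{b,c,y\}$, $e_3 = \{a,c,z\}$ with $x \neq c$, $y \neq a$, $z \neq b$. These three hyperedges are distinct, since any equality would force $\{a,b,c\} \in E(H)$. A short case analysis on how many of $x,y,z$ coincide shows that $e_1 \cup e_2 \cup e_3$ spans at most $6$ vertices and contains $3$ edges, contradicting the (6,3)-condition. Thus $G$ contains exactly $m$ triangles, all pairwise edge-disjoint.

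Now I apply the triangle removal lemma: for every $\varepsilon > 0$ there exists $\delta > 0$ such that any $N$-vertex graph that cannot be made triangle-free by deleting at most $\varepsilon N^2$ edges must contain at least $\delta N^3$ triangles. Since the $m$ triangles in $G$ are edge-disjoint, any triangle-destroying set of edges has size at least $m$. So if $m \ge \varepsilon n^2$, the lemma yields at least $\delta n^3$ triangles in $G$; but $G$ contains only $m \le \binom{n}{2}$ triangles in total, a contradiction once $n$ is sufficiently large. Hence $\rs{n} < \varepsilon n^2$ eventually for every fixed $\varepsilon > 0$, i.e.\ $\rs{n} = o(n^2)$. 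The main obstacle is the invocation of the triangle removal lemma itself, which is a deep consequence of Szemer\'edi's regularity lemma; the hyperedge-to-triangle translation and the small case analysis above are elementary.
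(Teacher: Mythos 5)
The paper does not prove Theorem~\ref{thm:RS} at all; it cites it as a known result of Ruzsa and Szemer\'edi, which is why your proof necessarily differs from ``the paper's proof.'' Your argument is the classical derivation of the $(6,3)$-theorem from the triangle removal lemma, and it is essentially correct: the passage to the $2$-shadow $G$ with exactly $3m$ edges (by linearity), the claim that every triangle of $G$ comes from a hyperedge (by linearity and the $(6,3)$-condition), the pairwise edge-disjointness of those $m$ triangles, and the contradiction via the removal lemma are all sound. Two small things worth tightening. First, the ``short case analysis on how many of $x,y,z$ coincide'' is not actually needed: $e_1\cup e_2\cup e_3$ is automatically contained in $\{a,b,c,x,y,z\}$ and so has at most six vertices, and in fact linearity forces $e_1\cap e_2=\{b\}$, $e_2\cap e_3=\{c\}$, $e_1\cap e_3=\{a\}$, so all six vertices are distinct and $e_1,e_2,e_3$ form a linear $3$-cycle, i.e.\ a copy of \whirl, directly contradicting the $(6,3)$-condition. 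Second, the threshold $m\ge\varepsilon n^2$ sits on the wrong side of the removal lemma as you phrased it, since a graph with exactly $\varepsilon n^2$ edge-disjoint triangles \emph{can} be made triangle-free by deleting $\varepsilon n^2$ edges; replace this with $m>\varepsilon n^2$, or invoke the lemma at parameter $\varepsilon/2$, and the conclusion $\rs{n}=o(n^2)$ follows unchanged. As you observe, the cost of this route is its reliance on the triangle removal lemma and hence on Szemer\'edi's regularity lemma, which is presumably why the paper is content to cite the theorem rather than reprove it.
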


The following result, a counting version of the $(6,3)$-theorem, is by no means new; in the context Ruzsa--Szemer\'edi hypergraphs, it was mentioned by Balogh and Li~\cite{BaloghLi2020} as an extension of a result of Erd\H{o}s, Frankl and R\"{o}dl~\cite{ErdosFranklRodl1986}. We include a proof for the sake of completeness.
\begin{lemma}\label{lemma:sX-3}
	$\sX(n,3) = 2^{o(n^2)}$.
\end{lemma}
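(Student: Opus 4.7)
The plan is a direct argument exploiting a bijection between Ruzsa--Szemer\'edi hypergraphs on $[n]$ and certain sparse graphs on $[n]$. Given an RS hypergraph $H$, I would form the ``shadow'' graph $G_H$ on $[n]$ whose edges are the pairs contained in some triple of $H$. By linearity, each edge of $G_H$ lies in exactly one triple of $H$, so $|E(G_H)| = 3|E(H)| \le 3\,\rs{n}$, which is $o(n^2)$ by Theorem~\ref{thm:RS}.

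The key step will be to show that the map $H \mapsto G_H$ is injective on RS hypergraphs: in fact, the triples of $H$ are exactly the triangles of $G_H$. Every triple of $H$ gives a triangle of $G_H$ by construction. Conversely, if $\{a,b,c\}$ is a triangle of $G_H$, then there exist triples $\{a,b,x\},\{a,c,y\},\{b,c,z\} \in H$. If $\{a,b,c\}$ itself is not in $H$, then $x,y,z \notin \{a,b,c\}$, and linearity forces $x,y,z$ to be pairwise distinct (otherwise two of the three triples would share two vertices). The three triples then span six distinct vertices and form a linear $3$-cycle, contradicting the $\whirl$-freeness of $H$. Hence $H$ is recoverable from $G_H$ as the set of triangles of $G_H$.

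Given this injectivity, $\sX(n,3)$ is bounded by the number of graphs on $[n]$ with at most $3\,\rs{n}$ edges. Writing $\alpha_n := \rs{n}/n^2 \to 0$, a standard binomial estimate bounds this count by $2^{O(\alpha_n \log(1/\alpha_n))\,n^2}$, which is $2^{o(n^2)}$ since $x \log(1/x) \to 0$ as $x \to 0$. The only nontrivial part of the whole argument is the injectivity claim; the reduction to a count of sparse graphs and the entropy estimate itself are routine. Note in particular that because we pass to graphs on the $n$-vertex ground set (rather than $3$-uniform hypergraphs, where each edge costs $3\log n$ bits to specify), we save the otherwise awkward $\log n$ factor without needing container or regularity machinery.
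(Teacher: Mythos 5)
Your proof is correct and is essentially the same argument as the paper's: you pass to the 2-shadow graph $\partial H$, show the map is injective because the triples of $H$ are recoverable as the triangles of $\partial H$ (the paper phrases this as ``each edge of $\partial H$ lies in a unique triangle,'' which is equivalent), and then bound the count of sparse graphs by an entropy estimate. Your write-up spells out the injectivity step a bit more explicitly than the paper does, but there is no genuine difference in method.
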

\begin{proof}
	Let~$H$ be a Ruzsa--Szemer\'edi hypergraph on~$n$ vertices, and let~$\partial H$ be its 2-shadow, i.e.\ the (ordinary) graph on the same vertex set in which two vertices are adjacent if and only if they appear in the same edge of~$H$.
	
	Since~$H$ does not contain a linear 3-cycle, $\partial H$ has the property that each edge appears in a unique triangle. This implies that $H \mapsto \partial H$ is injective.
	
	As $H$ has at most $\rs{n}$ edges, $\partial H$ has at most $3\rs{n}$ edges. It follows that the number of possible $\partial H$, and hence the number of Ruzsa--Szemer\'edi hypergraphs, on~$n$ vertices is at most
	\begin{equation*}
		\sum_{i \le 3\rs{n}} \binom{\binom{n}{2}}{i} \le 2^{H\left(3\rs{n}/\binom{n}{2}\right)\binom{n}{2}}.
	\end{equation*}
	Here, $H$ denotes the binary entropy function, which has the property that $H(\varepsilon) \downarrow 0$ as $\varepsilon \downarrow 0$. The lemma now follows as $\rs{n}/\binom{n}{2} = o(1)$ by Theorem~\ref{thm:RS}.
\end{proof}

\begin{proof}[Proof of Theorem~\ref{thm:sX}]
	Let $\mathcal{M}$ be the class of sparse paving matroids in $\mathcal{X}$, so we can write $\sX(n,3) = m_\mathcal{M}(n,3)$. By Lemma~\ref{lemma:sX-3}, $\log m_\mathcal{M}(n,3) \le \frac{0.49}{n}\binom{n}{3}$ for sufficiently large~$n$. It follows from Lemma~\ref{lemma:entropy-blowup2} that $\sX(n) = m_\mathcal{M}(n) = o(s(n))$.
\end{proof}

Due to a construction of Behrend's~\cite{Behrend1946}, the exponent~2 in the statement of Theorem~\ref{thm:RS} can not be replaced by $2-\varepsilon$ for any $\varepsilon > 0$ (see~\cite{RuzsaSzemeredi1978}). Since the property of being a Ruzsa--Szemer\'edi hypergraph can not be destroyed by removing edges, the same construction implies that $\sX(n,3) = 2^{\Omega(n^{2-\varepsilon})}$ for all $\varepsilon > 0$.

\section{\label{sec:proofs}From sparse paving to general matroids: Proof of Theorem~\ref{thm:main-matroid}}

We now turn to proving the main result, Theorem~\ref{thm:main-matroid}.

\subsection{From sparse paving to paving matroids}

A paving matroid is one in which the only interesting flats are the hyperplanes: all smaller flats are independent. Equivalently, a matroid~$M$ is paving if and only if each of its circuits has cardinality~$r(M)$ or $r(M)+1$. Every sparse paving matroid is paving, but there are paving matroids that are not sparse paving.

The goal of this section is to extend the bound on~$\sX(n,3)$ in Lemma~\ref{lemma:sX-3} to paving matroids. We use a technique by Pendavingh and Van der Pol~\cite{PendavinghVanderpol2017} to encode each rank-3 paving matroid as a pair of rank-3 sparse paving matroids (the technique works in greater generality, but here it suffices to specialise to the rank-3 case). If the paving matroid is $\{\whirl, M(K_4)\}$-free, then so are the two sparse paving matroids, which will show that $\pX(n,3)$ is at most $(\sX(n,3))^2$.

Let~$M$ be a rank-3 paving matroid on a ground set~$E$ that is linearly ordered. The matroid~$M$ can be reconstructed from the collection
\begin{equation*}
	\mathcal{V}(M) = \negthickspace\bigcup_{H \in \mathcal{H}(M)}\negthickspace \mathcal{V}(H),
\end{equation*}
where $\mathcal{H}(M)$ is the set of hyperplanes of~$M$, and for each hyperplane~$H$, the elements of~$\mathcal{V}(H)$ are the consecutive triples in~$H$:
\begin{equation*}
	\mathcal{V}(H) = \left\{V\!\in\!\binom{H}{3} : \text{there are no $v, v' \in V$ and $h \in H\setminus V$ such that $v < h < v'$}\right\}.
\end{equation*}
The linear order on~$E$ induces a linear order on~$\mathcal{V}(H)$, so that we can write $\mathcal{V}(H) = \{V_H^0, V_H^1, \ldots, V_H^{|H|-2}\}$ such that $V_H^i < V_H^j$ for all $i < j$ and $|V_H^i \cap V_H^j| = 2$ if and only if $|i-j|=1$.

Define
\begin{equation*}
	\mathcal{V}^0(H) = \{V_H^i : \text{$i$ is even}\}, \qquad \mathcal{V}^1(H) = \{V_H^i : \text{$i$ is odd}\},
\end{equation*}
and
\begin{equation*}
	\mathcal{V}^0(M) = \negthickspace\bigcup_{H \in \mathcal{H}(M)}\negthickspace \mathcal{V}^0(H), \qquad \mathcal{V}^1(M) = \negthickspace\bigcup_{H \in \mathcal{H}(M)}\negthickspace \mathcal{V}^1(H).
\end{equation*}
\begin{theorem}[Pendavingh--Van der Pol, \cite{PendavinghVanderpol2017}]\label{thm:paving-encoding}
	For $k \in \{0,1\}$, the set $\mathcal{V}^k(M)$ is the set of circuit-hyperplanes of a rank-3 sparse paving matroid on $E$. Moreover, the map $M \mapsto (\mathcal{V}^0(M), \mathcal{V}^1(M))$ is injective.
\end{theorem}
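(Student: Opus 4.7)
The plan is to establish the two assertions in turn. For the first, I would show that any two distinct triples in $\mathcal{V}^k(M)$ share at most one element; since a rank-3 sparse paving matroid is precisely a linear 3-uniform hypergraph (as recalled in Section~\ref{sec:s}), this identifies $\mathcal{V}^k(M)$ as the set of circuit-hyperplanes of such a matroid on~$E$. There are two cases. When both triples lie in $\mathcal{V}^k(H)$ for the same hyperplane $H = \{h_0 < h_1 < \cdots\}$, we have $V_H^i = \{h_i, h_{i+1}, h_{i+2}\}$, so a direct computation gives $|V_H^i \cap V_H^j| = \max\{0, 3-|i-j|\}$; since the two indices share parity, $|i-j|$ is a positive even number and the intersection has size at most~$1$. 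When the triples come from $\mathcal{V}(H)$ and $\mathcal{V}(H')$ for distinct hyperplanes, their intersection is contained in $H \cap H'$, which is a flat of rank at most~$1$; in a rank-3 paving matroid every such flat is a singleton or empty, so $|H \cap H'| \le 1$.

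For injectivity, I would reconstruct~$M$ from the pair $(\mathcal{V}^0(M), \mathcal{V}^1(M))$. Taking the union recovers $\mathcal{V}(M)$. Define an auxiliary graph on $\mathcal{V}(M)$ whose edges are the pairs of triples that share exactly two elements. The analysis above implies that no edge crosses between distinct hyperplanes, whereas inside a single $\mathcal{V}(H)$ the edges are precisely the consecutive pairs $\{V_H^i, V_H^{i+1}\}$. Consequently the components of the graph are paths, in bijection with the hyperplanes of~$M$ of size at least~$3$, and each such hyperplane is recovered as the union of the vertices of its path. The 3-element circuits of a rank-3 paving matroid are exactly the 3-subsets of its hyperplanes of size at least~$3$, and the 4-element circuits are the 4-subsets containing no 3-circuit; hence the family of hyperplanes of size at least~$3$ (together with the ground set~$E$) determines~$M$.

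The main obstacle is the ``distinct hyperplanes'' subcase of the first assertion: this is the one place where the paving hypothesis is used essentially, because in an arbitrary rank-3 matroid two hyperplanes could share a non-trivial parallel class and their $\mathcal{V}$-triples would then be allowed to overlap in two elements. Once this point is secured, the parity argument and the path-component reconstruction are routine bookkeeping.
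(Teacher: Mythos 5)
The paper does not contain a proof of this theorem; it is imported verbatim from Pendavingh and Van der Pol \cite{PendavinghVanderpol2017}, so there is no internal argument to compare your proposal against. Judged on its own merits, your argument is correct. The linearity of $\mathcal{V}^k(M)$ splits exactly as you describe: within one hyperplane the parity condition forces $|i-j|\ge 2$ so consecutive-triple overlaps drop to at most one point, and across distinct hyperplanes the intersection $H\cap H'$ is a flat of rank at most one, which in a paving matroid is a singleton or empty. Your closing remark correctly identifies this second subcase as the essential use of the paving hypothesis; without it, a shared parallel class of size two or more would allow overlaps of size two between triples from different hyperplanes.

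The reconstruction argument is also sound. Forming the union $\mathcal{V}(M)=\mathcal{V}^0(M)\cup\mathcal{V}^1(M)$ and taking the graph whose edges are pairs of triples meeting in two points, you get path components that partition $\mathcal{V}(M)$ by hyperplane (the intersection computation rules out cross-hyperplane edges and gives exactly the consecutive pairs within each $\mathcal{V}(H)$), and the vertex-union of each component is the corresponding hyperplane of size at least three. Since a rank-3 paving matroid is determined by its ground set together with its hyperplanes of size at least three (the 3-circuits are their 3-subsets and the 4-circuits are the remaining 4-subsets), injectivity follows. One small remark: the paper's indexing $\mathcal{V}(H)=\{V_H^0,\ldots,V_H^{|H|-2}\}$ appears to be off by one (a set of size $m$ has $m-2$ consecutive triples, not $m-1$); your indexing $V_H^i=\{h_i,h_{i+1},h_{i+2}\}$ with $i$ running from $0$ to $|H|-3$ is the correct one and is what the argument actually needs.
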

We will refer to the two sparse paving matroids corresponding to~$M$ as~$M^0$ and~$M^1$. The original matroid $M$ is a weak-map image of both~$M^0$ and~$M^1$ (i.e.\ if $D$ is dependent in $M^0$ or in $M^1$, then it is dependent in $M$); as $M$ is paving, this implies the following lemma.
\begin{lemma}\label{lemma:paving-whirl-k4-free}
	If~$M$ is $\{\whirl, M(K_4)\}$-free, then so are $M^0$ and $M^1$.
\end{lemma}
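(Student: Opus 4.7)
My plan is to prove the contrapositive: if some $M^k$ (with $k\in\{0,1\}$) has $\whirl$ or $M(K_4)$ as a minor, then so does $M$. Since $M^k$, $\whirl$, and $M(K_4)$ all have rank $3$, such a minor must be a restriction, so there is a $6$-element set $S\subseteq E$ with $M^k|S\in\{\whirl, M(K_4)\}$, and the goal reduces to showing $M|S\in\{\whirl, M(K_4)\}$.

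The first step is to establish that $r_M(S)=3$. Weak-map gives $r_M(S)\le r_{M^k}(S)=3$. If $r_M(S)\le 2$, then $S$ lies in a single hyperplane $H$ of $M$. Because $M$ is paving of rank $3$ (and hence simple, as paving forbids circuits of size $<3$), distinct hyperplanes of $M$ meet in at most one element, so every circuit-hyperplane of $M^k$ contained in $S$ must already appear in $\mathcal{V}^k(H)$. Directly from the definition, two triples of $\mathcal{V}^k(H)$ share an element only when they are adjacent in the chain of consecutive triples of $H$, so among any three or four of them at least one pair is disjoint. This is incompatible with the pairwise-intersecting triangle patterns of $\whirl$ (three triangles, every pair meeting in one element) and $M(K_4)$ (four triangles, every pair meeting in one element), so $r_M(S)=3$.

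With $r_M(S)=3$, the restriction $M|S$ is paving of rank $3$ on six elements, and weak-map plus paving force each circuit-hyperplane of $M^k|S$ to be a $3$-circuit of $M|S$. The main technical claim is that $M|S$ has no rank-$2$ flat $L$ with $|L|\ge 4$. If it did, then any circuit-hyperplane $T$ of $M^k|S$ with $|T\cap L|\ge 2$ would have $\mathrm{cl}(T)=\mathrm{cl}(T\cap L)=L$, forcing $T\subseteq L$; so each such $T$ either lies in $L$ or meets $L$ in at most one point. A short case analysis on $|L|\in\{4,5\}$, using that any two triangles of $\whirl$ or $M(K_4)$ intersect in exactly one element and together the triangles cover all six ground-set elements, rules out every configuration (typically two triangles are forced into $L$ and their union already has five elements, or all triangles are forced into $L$ and already cover all six). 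Consequently $M|S$ is actually sparse paving.

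Finally, $M|S$ is then a rank-$3$ sparse paving matroid on six elements whose circuit-hyperplanes form a linear $3$-uniform hypergraph containing the triangles of $M^k|S$. An elementary verification on six vertices shows that the three triangles of $\whirl$ admit exactly one compatible linear extension (the fourth $M(K_4)$-triangle), while the four triangles of $M(K_4)$ admit no linear extension at all; therefore $M|S\in\{\whirl, M(K_4)\}$, giving $M$ the desired minor. The main obstacle is the ``no fat rank-$2$ flat'' claim, which genuinely combines the weak-map inclusion of the $M^k|S$-triangles as $3$-circuits with the intersection combinatorics of $\whirl$ and $M(K_4)$; the other two steps are enumeration on six vertices and a structural unpacking of $\mathcal{V}^k$.
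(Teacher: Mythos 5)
Your proof is correct. It is worth noting that the paper itself gives essentially no proof here---just the one-line observation that $M$ is a weak-map image of $M^0$ and $M^1$ and that $M$ is paving---so you have filled in all the substance. A couple of comparative remarks. First, the crucial case you handle in your first step (ruling out $r_M(S)\le 2$) is genuinely necessary and is \emph{not} a consequence of the weak-map observation plus pavingness alone: for example, take $M$ on $[7]$ with single long line $H=\{1,\dots,6\}$ and $7$ in free position; the sparse paving matroid $N$ with circuit-hyperplanes $\{1,2,3\},\{3,4,5\},\{5,6,1\}$ weak-maps onto $M$ and has a $\whirl$-restriction, yet $M$ is $\{\whirl,M(K_4)\}$-free. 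So the chain structure of $\mathcal{V}^k(H)$ that you invoke (non-adjacent triples are disjoint, so at most one of the $\whirl$/$M(K_4)$ triangles can come from any single $\mathcal{V}^k(H)$) is doing real work, and your proof correctly identifies and uses it. Second, once $r_M(S)=3$, a slightly shorter route to sparse-pavingness of $M|S$ than your case analysis on $|L|\in\{4,5\}$ is available: with $L_i=\mathrm{cl}_{M|S}(T_i)$, simplicity of $M|S$ gives $|L_i\cap L_j|=1$ for $i\ne j$, so inclusion--exclusion yields $|L_1|+|L_2|+|L_3|\le |S|+3=9$, forcing $L_i=T_i$; this packages your steps two and three together. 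But your version is equally valid, and the final enumeration on six vertices (the unique linear extension of the $\whirl$ triangles being the fourth $M(K_4)$ triangle, and $M(K_4)$ admitting none) is the right closing argument.
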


The following lemma is the paving version of Theorem~\ref{thm:main-rank3}.
\begin{lemma}\label{lemma:pX-3}
	$\pX(n,3) = 2^{o(n^2)}$.
\end{lemma}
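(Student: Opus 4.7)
The plan is to use the encoding from Theorem~\ref{thm:paving-encoding} in order to bound $\pX(n,3)$ in terms of $\sX(n,3)$, after which Lemma~\ref{lemma:sX-3} immediately yields the conclusion. The ground set $[n]$ already carries a natural linear order, so every rank-3 paving matroid $M$ on $[n]$ gives rise to a well-defined pair $(M^0, M^1)$ of rank-3 sparse paving matroids on $[n]$, as constructed just before Theorem~\ref{thm:paving-encoding}.

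The first step is to assemble the three facts we need. Theorem~\ref{thm:paving-encoding} tells us that $M\mapsto (M^0,M^1)$ is injective on rank-3 paving matroids on $[n]$. Lemma~\ref{lemma:paving-whirl-k4-free} tells us that if $M \in \mathcal{X}$, then $M^0 \in \mathcal{X}$ and $M^1 \in \mathcal{X}$. So the encoding restricts to an injection from rank-3 paving matroids in $\mathcal{X}$ on $[n]$ into the Cartesian product of the set of rank-3 sparse paving matroids in $\mathcal{X}$ on $[n]$ with itself. This yields the bound
\begin{equation*}
    \pX(n,3) \le \sX(n,3)^2.
\end{equation*}

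The second step is purely asymptotic: by Lemma~\ref{lemma:sX-3}, $\sX(n,3) = 2^{o(n^2)}$, and squaring a $2^{o(n^2)}$ quantity still gives a $2^{o(n^2)}$ quantity, which finishes the proof.

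There is essentially no obstacle here, since the heavy lifting has been done in the preceding lemmas: Theorem~\ref{thm:paving-encoding} supplies the injection, Lemma~\ref{lemma:paving-whirl-k4-free} ensures that the encoding respects the excluded-minor class, and Lemma~\ref{lemma:sX-3} provides the sparse paving count. The only thing one should double-check is that the encoding is applied with a fixed linear order on $[n]$ (so that the pair $(M^0,M^1)$ is determined by $M$ alone), which is automatic for the ground set $[n]$.
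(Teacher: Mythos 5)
Your proof is correct and follows exactly the same route as the paper: use Theorem~\ref{thm:paving-encoding} and Lemma~\ref{lemma:paving-whirl-k4-free} to obtain the injection $M \mapsto (M^0,M^1)$ into pairs of $\{\whirl, M(K_4)\}$-free sparse paving matroids, giving $\pX(n,3) \le \sX(n,3)^2$, and then invoke Lemma~\ref{lemma:sX-3}. The remark about fixing the linear order on $[n]$ is a sensible sanity check but is, as you note, automatic.
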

\begin{proof}
	By Theorem~\ref{thm:paving-encoding} and Lemma~\ref{lemma:paving-whirl-k4-free}, every rank-3 $\{\whirl, M(K_4)\}$-free paving matroid on ground set~$[n]$ can be described by a pair of rank-3 $\{\whirl, M(K_4)\}$-free sparse paving matroids on~$[n]$. This immediately implies $\pX(n,3) \le \left(\sX(n,3)\right)^2$. The conclusion now follows from Lemma~\ref{lemma:sX-3}.
\end{proof}

\subsection{From paving to general matroids}

We are now ready to prove Theorem~\ref{thm:main-rank3}, the upper bound on~$\mX(n,3)$.

Any rank-3 matroid~$M$ on ground set~$[n]$ with $k$ rank-1 flats can be described by a rank-3 paving matroid on ground set~$[k]$ (that is isomorphic to the simplification of~$M$), together with an ordered set partition $(X_0, X_1, \ldots, X_k)$ of~$[n]$, such that $X_0$ gives the set of loops of~$M$, and $X_1, X_2, \ldots, X_k$ describe the parallel classes of~$M$. If $M$ is $\{\whirl, M(K_4)\}$-free, then so is its associated paving matroid.
\begin{proof}[Proof of Theorem~\ref{thm:main-rank3}]
	The number of ordered partitions of~$[n]$ into $k+1$ parts is at most $n^n$. Summing over~$k$, we obtain
	\begin{equation*}
		\mX(n,3) \le \sum_{k=3}^n n^n \pX(k,3) \le n^{n+1} \pX(n,3).
	\end{equation*}
	As $n^{n+1} = 2^{o(n^2)}$, the result follows from the bound on $\pX(n,3)$ in Lemma~\ref{lemma:pX-3}.
\end{proof}

Finally, we prove Theorem~\ref{thm:main-matroid}, which is readily implied by the following result.
\begin{theorem}
	$\mX(n) = o(s(n))$.
\end{theorem}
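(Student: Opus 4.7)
The plan is to apply Lemma~\ref{lemma:entropy-blowup2} directly, with $s=3$, to the class $\mathcal{X}$. Since being $\{\whirl, M(K_4)\}$-free is preserved under taking minors (in particular, under contraction), $\mathcal{X}$ satisfies the contraction-closedness hypothesis of the lemma. All that remains is to check the quantitative hypothesis on $\log \mX(n,3)$.

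For the quantitative check, note that $\frac{1}{n}\binom{n}{3} = \frac{(n-1)(n-2)}{6} = \Theta(n^2)$. By Theorem~\ref{thm:main-rank3} we have $\log \mX(n,3) = o(n^2)$, so in particular there exists $n_0$ such that for all $n \ge n_0$,
\begin{equation*}
	\log \mX(n,3) \le \frac{0.49}{n}\binom{n}{3}.
\end{equation*}
This verifies the hypothesis of Lemma~\ref{lemma:entropy-blowup2} with $c = 0.49 < 1/2$ and $s = 3$.

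Applying the lemma then yields $\mX(n) = o(s(n))$, which is the desired conclusion (and, combined with Lemma~\ref{lemma:s-lowerbound}, immediately implies Theorem~\ref{thm:main-matroid}). There is essentially no obstacle here: the real work has been done in establishing Theorem~\ref{thm:main-rank3} (the rank-3 count) and in setting up the entropy-blow-up machinery of Lemma~\ref{lemma:entropy-blowup2}; the final statement is the intended payoff of bolting those two ingredients together.
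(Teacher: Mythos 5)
Your proposal is correct and follows exactly the same route as the paper: verify the hypothesis of Lemma~\ref{lemma:entropy-blowup2} with $s=3$ and $c=0.49$ via Theorem~\ref{thm:main-rank3}, then apply the lemma. You spell out the (routine but worth stating) checks of contraction-closedness and the comparison $o(n^2)$ vs.\ $\frac{0.49}{n}\binom{n}{3}$ that the paper leaves implicit, but there is no substantive difference.
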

\begin{proof}
	By Theorem~\ref{thm:main-rank3}, $\log \mX(n,3) \le \frac{0.49}{n}\binom{n}{3}$ for sufficiently large $n$. The result now follows from an application of Lemma~\ref{lemma:entropy-blowup2}.
\end{proof}

\section{\label{sec:fano}Does almost every matroid contain \whirl\ or $F_7$ as a minor?}

It is natural to ask if the following strengthening of Theorem~\ref{thm:main-matroid} holds: Does almost every matroid have \whirl\ or $F_7$ as a minor? In this section, we show that the corresponding version of the Ruzsa--Szemer\'{e}di $(6,3)$-theorem fails.

As in Section~\ref{sec:s}, it will be useful to think of rank-3 sparse paving matroids as linear 3-uniform hypergraphs.

Let $\mathcal{G}$ be a family of 3-uniform linear hypergraphs. In this section, we write $\exlin{n}{\mathcal{G}}$ for the \emph{linear Tur\'{a}n number}, i.e.\ the maximum number of edges in an $n$-vertex linear 3-uniform hypergraph that does not contain a copy of any member of $\mathcal{G}$ as a subgraph. Similarly, we write $\exlinind{n}{\mathcal{G}}$ for the \emph{linear induced Tur\'{a}n number}: the maximum number of edges in an $n$-vertex 3-uniform linear hypergraph that does not contain a copy of any member of $\mathcal{G}$ as an \emph{induced} subgraph. When $\mathcal{G} = \{G\}$ has but a single member, we shall write $\exlin{n}{G} = \exlin{n}{\mathcal{G}}$. In these terms, the Ruzsa--Szemer\'{e}di $(6,3)$-theorem states that
\begin{equation*}
	\exlinind{n}{\{\whirl, M(K_4)\}} = \exlin{n}{\whirl} = o(n^2).
\end{equation*}

Let $F$ be the \emph{3-fan} or \emph{sail}; i.e.\ the hypergraph on vertices $\{1,2,3,4,5,6,7\}$ with edges $\{1,2,3\}$, $\{1,4,5\}$, $\{1,6,7\}$ and $\{3,5,7\}$ (see Figure~\ref{fig:fan}).

\begin{figure}[t]
	\subcaptionbox{\label{fig:fan}The 3-fan or sail $F$.}[.4\linewidth]{\includegraphics[width=3cm]{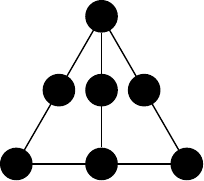}}
	\subcaptionbox{\label{fig:fano}The Fano plane $F_7$.}[.4\linewidth]{\includegraphics[width=3cm]{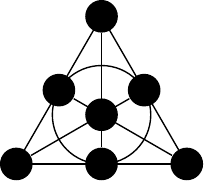}}
	\caption{Two hypergraphs that appear in Section~\ref{sec:fano}}
\end{figure}

\begin{theorem}[F\"{u}redi--Gy\'{a}rf\'{a}s, \cite{FurediGyarfas2017}]\label{thm:fan}
	$\exlin{n}{F} \le \frac{n^2}{9}$.
\end{theorem}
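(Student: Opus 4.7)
The plan is to combine a local structural lemma, derived from $F$-freeness, with a single Cauchy--Schwarz step. For a linear $3$-uniform hypergraph $H$ on $n$ vertices, write $d(v)$ for the degree of a vertex and $N(v) = \{u \ne v : \{v, u, w\} \in E(H) \text{ for some } w\}$ for its neighborhood. Linearity makes the edges through $v$ a near-matching away from $v$, so $|N(v)| = 2 d(v)$; moreover, whenever $y \in f \in E(H)$, the two other vertices of $f$ automatically lie in $N(y)$.

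The structural lemma I would establish is: if $H$ is $F$-free, then for every edge $f = \{x,y,z\}$ and every vertex $v \in V(H) \setminus f$, one has $|f \cap N(v)| \le 2$. Indeed, if all three of $x, y, z$ lay in $N(v)$, there would be edges $\{v, x, a\}$, $\{v, y, b\}$, $\{v, z, c\}$ in $H$; linearity forces $\{a, b, c\}$ to be disjoint from $\{v, x, y, z\}$ and internally distinct (else two of those edges, or one of them together with $f$, would share at least two vertices). Together with $f$, these four edges on seven distinct vertices form a copy of the 3-fan centered at $v$.

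I would then double-count the set of triples $(v, f, y)$ with $f \in E(H)$, $y \in f$, $v \notin f$, and $y \in N(v)$. Summing first over $(v, f)$, the structural lemma gives the upper bound $2(n-3)|E(H)|$. Summing first over $y$, and noting that for each of the $d(y)$ edges $f$ through $y$ the other two vertices of $f$ are forbidden values of $v$, gives $\sum_y d(y)(|N(y)| - 2) = 2\sum_y d(y)^2 - 6|E(H)|$. Comparing yields $\sum_y d(y)^2 \le n\,|E(H)|$, and Cauchy--Schwarz together with $\sum_y d(y) = 3|E(H)|$ then produces $9|E(H)|^2 \le n \sum_y d(y)^2 \le n^2 |E(H)|$, whence $|E(H)| \le n^2/9$. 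The one delicate point is the structural lemma, where one has to verify via linearity that the seven candidate vertices and four candidate edges of the 3-fan really are distinct; everything else is routine bookkeeping.
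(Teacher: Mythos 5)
The paper does not prove this theorem; it is cited directly from F\"uredi and Gy\'arf\'as \cite{FurediGyarfas2017}, so there is no internal proof to compare against. Your argument is a correct, self-contained proof, and it is in the same double-counting spirit as the original.

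The key structural claim is verified correctly: if $v\notin f=\{x,y,z\}$ and $x,y,z\in N(v)$, the witnessing edges $\{v,x,a\}$, $\{v,y,b\}$, $\{v,z,c\}$ must, by linearity, have $a,b,c$ pairwise distinct and disjoint from $\{v,x,y,z\}$ (any coincidence would make two of the four edges, or one of them with $f$, meet in two vertices), so these three edges together with $f$ give exactly the $3$-fan $F$ with center $v$. The double count of triples $(v,f,y)$ is then clean: the upper bound $2(n-3)|E(H)|$ follows from $|f\cap N(v)|\le 2$, while summing over $y$ (using $|N(y)|=2d(y)$ and excluding the two co-members of each edge through $y$) gives $2\sum_y d(y)^2-6|E(H)|$, whence $\sum_y d(y)^2\le n|E(H)|$. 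Cauchy--Schwarz with $\sum_y d(y)=3|E(H)|$ finishes it. I see no gap; the "delicate point" you flag is handled correctly.
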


For $r \ge 2$, let $B_r$ be the linear 3-uniform hypergraph on vertices $\{x \in \mathbb{F}_2^r : (x_1,x_2) \neq (0,0)\}$, in which three distinct vertices $x$, $y$ and $z$ form a hyperedge if and only if $x + y + z = 0$ in $\mathbb{F}_r^2$. Note that the vertices and hyperedges of this hypergraph are formed by the points and lines of a rank-$r$ binary Bose--Burton geometry, which is obtained from $\PG{r-1}{2}$ by removing the points of a full subgeometry of rank $r-2$. Alternatively, the edges of $B_r$ form the blocks of a transversal design with three groups, where two vertices of $B_r$ are in the same group when they coincide on their first two coordinates.

The hypergraph $B_r$ has $n = 3\cdot 2^{r-2}$ vertices and $2^{2(r-2)} = \frac{n^2}{9}$ edges and was shown by F\"{u}redi and Gy\'{a}rf\'{a}s~\cite{FurediGyarfas2017} to be maximal $F$-free. Using the geometric construction of~$B_r$, it can be shown that these hypergraphs are not only $F$-free, but induced-$\{\whirl, F_7\}$-free as well; the following result shows that they are in fact maximal induced-$\{\whirl, F_7\}$-free.
\begin{theorem}
	$\exlinind{n}{\{\whirl, F_7\}} \le \frac{n^2}{9}$. Equality holds infinitely often.
\end{theorem}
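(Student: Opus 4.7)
The plan is to reduce the upper bound to Theorem~\ref{thm:fan} by showing that any linear 3-uniform hypergraph $H$ avoiding both $\whirl$ and $F_7$ as induced subhypergraphs is automatically $F$-free; then
\begin{equation*}
	\exlinind{n}{\{\whirl, F_7\}} \le \exlin{n}{F} \le \frac{n^2}{9}
\end{equation*}
follows from Theorem~\ref{thm:fan}. The matching lower bound is provided by the construction $B_r$ recalled just before the theorem statement: it has $n = 3\cdot 2^{r-2}$ vertices, $n^2/9$ edges, and is induced-$\{\whirl, F_7\}$-free, so equality holds for all $n$ of the form $3\cdot 2^{r-2}$.

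For the reduction, I would suppose $H$ contains $F$ (not necessarily induced) on a labelled vertex set $\{1,\ldots,7\}$ with edges $\{1,2,3\}$, $\{1,4,5\}$, $\{1,6,7\}$, $\{3,5,7\}$, and analyse the induced subhypergraph $H[\{1,\ldots,7\}]$. A routine inspection of the $\binom{7}{3}=35$ triples of $\{1,\ldots,7\}$ shows that linearity with the four edges of $F$ rules out every triple except those four and the four candidates $\{2,4,6\}$, $\{2,4,7\}$, $\{2,5,6\}$, $\{3,4,6\}$; any other triple already shares two vertices with some edge of $F$ and so cannot occur in $H$.

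The key step is a small symmetry argument. Deleting vertex $2$, $4$, or $6$ from $\{1,\ldots,7\}$ leaves exactly three $F$-edges, and in each case those three edges form a linear 3-cycle, i.e.\ a copy of $\whirl$. Among the four candidate triples, exactly one avoids each deleted vertex---namely $\{3,4,6\}$, $\{2,5,6\}$, $\{2,4,7\}$ for the removals of $2$, $4$, $6$ respectively. If any of these three triples is missing from $H$, the corresponding six-vertex induced subhypergraph of $H$ is exactly $\whirl$, contradicting induced-$\whirl$-freeness. Hence all three of $\{2,4,7\}$, $\{2,5,6\}$, $\{3,4,6\}$ lie in $H$.

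At this point $H[\{1,\ldots,7\}]$ contains the seven edges $\{1,2,3\}$, $\{1,4,5\}$, $\{1,6,7\}$, $\{3,5,7\}$, $\{2,4,7\}$, $\{2,5,6\}$, $\{3,4,6\}$, and the remaining candidate $\{2,4,6\}$ shares two vertices with each of the three new triples, so it is forbidden by linearity. Thus $H[\{1,\ldots,7\}]$ has precisely these seven edges, and a direct check that each of the $\binom{7}{2}=21$ pairs of vertices lies in a unique one of them identifies $H[\{1,\ldots,7\}]$ as an induced copy of $F_7$---again a contradiction. Hence $H$ cannot contain $F$, completing the reduction. The only real work is the small finite case analysis on seven vertices; I do not anticipate a conceptual obstacle beyond carrying out the bookkeeping cleanly.
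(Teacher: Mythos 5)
Your argument is correct and essentially the same as the paper's: both reduce to showing that an induced-$\{\whirl, F_7\}$-free linear hypergraph cannot contain the 3-fan $F$, by observing that deleting any of the vertices $2,4,6$ from $\{1,\ldots,7\}$ isolates a copy of $\whirl$ that must be non-induced, forcing the three extra edges that complete a Fano plane. The paper is merely terser — it encodes your three extra triples compactly as $\{v_2,v_4,v_6\}\triangle\{v_s,v_{s+1}\}$ — while you additionally spell out the enumeration of the four candidate triples, which is implicit in the paper's argument.
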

\begin{proof}
	Let $H$ be a linear 3-uniform hypergraph that contains neither \whirl\ nor $F_7$ as an induced subgraph. We claim that $H$ does not contain the 3-fan $F$ as a subgraph. Suppose, for the sake of contradiction, that $H$ contains edges $\{v_1, v_2, v_3\}$, $\{v_1, v_4, v_5\}$, $\{v_1, v_6, v_7\}$ and $\{v_3, v_5, v_7\}$, where the seven vertices are distinct. Let $V = \{v_1, v_2, \ldots, v_7\}$ and let $S = \{2,4,6\}$. For each $s \in S$, the subgraph $H[V\setminus\{v_s\}]$ contains a copy of \whirl; as such a subgraph cannot be induced, $V$ spans each of the edges $\{v_2, v_4, v_6\} \triangle \{v_s, v_{s+1}\}$, $s \in S$. But this means that $H$ contains a copy of $F_7$: a contradiction, so $H$ does not contain a copy of $F$. It follows that
	\begin{equation*}
		\exlinind{n}{\{\whirl, F_7\}} \le \exlin{n}{F} \le \frac{n^2}{9},
	\end{equation*}
	where the final inequality follows from Theorem~\ref{thm:fan}.
	
	For each $r \ge 2$, the hypergraph $B_r$ provides an example for which the upper bound is attained.
\end{proof}

Let $f(n) = s_{\text{Ex}(\whirl, F_7)}(n,3)$ be the number of linear 3-uniform hypergraphs without induced \whirl\ or $F_7$. A trivial upper bound on $f(n)$ is
\begin{equation*}
	f(n) \le \sum_{i=0}^{\frac{n^2}{9}} \binom{\binom{n}{3}}{i} = 2^{O(n^2\log n)}.
\end{equation*}
We end this section with a question.
\begin{question}
	Is $f(n) = 2^{\Theta(n^2)}$?
\end{question}
A sufficiently strong upper bound on $f(n)$ may be a first step toward proving that almost every matroid has a \whirl- or $F_7$-minor. However, even in that case, additional ideas are required, as the $\{\whirl, F_7\}$-version of Lemma~\ref{lemma:paving-whirl-k4-free} fails.

\bibliographystyle{alpha}
\bibliography{whirlandk4}

\begin{thebibliography}{MNWW11}

\bibitem[Beh46]{Behrend1946}
F.A. Behrend.
\newblock On sets of integers which contain no three terms in arithmetical
  progression.
\newblock {\em Proceedings of the {N}ational {A}cademy of {S}ciences of the
  {U}nited {S}tates of {A}merica}, 32(12):331--332, 1946.

\bibitem[BL20]{BaloghLi2020}
J\'{o}zsef Balogh and Lina Li.
\newblock On the number of linear hypergraphs of large girth.
\newblock {\em Journal of Graph Theory}, 93:113--141, 2020.

\bibitem[BPvdP14]{BansalPendavinghVanderpol2014}
N.~Bansal, R.A. Pendavingh, and J.G. van~der Pol.
\newblock An entropy argument for counting matroids.
\newblock {\em Journal of Combinatorial Theory, Series B}, 109:258--262, 2014.

\bibitem[EFR86]{ErdosFranklRodl1986}
P.~Erd\H{o}s, P.~Frankl, and V.~R\"{o}dl.
\newblock The asymptotic number of graphs not containing a fixed subgraph and a
  problem for hypergraphs having no exponent.
\newblock {\em Graphs and Combinatorics}, 2:113--121, 1986.

\bibitem[FG17]{FurediGyarfas2017}
Zolt\'an F\"uredi and Andr\'as Gy\'arf\'as.
\newblock The linear {T}ur\'an number of the $k$-fan.
\newblock Preprint, available on \arXiv{1710.03042}, 2017.

\bibitem[GS80]{GrahamSloane1980}
R.L. Graham and N.J.A. Sloane.
\newblock Lower bounds for constant weight codes.
\newblock {\em IEEE Transactions on Information Theory}, 26(1):37--43, 1980.

\bibitem[MNWW11]{MayhewNewmanWelshWhittle2011}
Dillon Mayhew, Mike Newman, Dominic Welsh, and Geoff Whittle.
\newblock On the asymptotic proportion of connected matroids.
\newblock {\em European Journal of Combinatorics}, 32(6):882--890, 2011.

\bibitem[Oxl11]{Oxley2011}
James Oxley.
\newblock {\em Matroid theory}, volume~21 of {\em Oxford graduate texts in
  mathematics}.
\newblock Oxford University Press, second edition, 2011.

\bibitem[PvdP15a]{PendavinghVanderpol2015}
R.A. Pendavingh and J.G. van~der Pol.
\newblock Counting matroids in minor-closed classes.
\newblock {\em Journal of Combinatorial Theory, Series B}, 111:126--147, 2015.

\bibitem[PvdP15b]{PendavinghVanderpol2015b}
Rudi Pendavingh and Jorn van~der Pol.
\newblock On the number of matroids compared to the number of sparse-paving
  matroids.
\newblock {\em Electronic Journal of Combinatorics}, 22(2):P2.51, 2015.

\bibitem[PvdP16]{PendavinghVanderpol2016}
Rudi Pendavingh and Jorn van~der Pol.
\newblock Counting matroids by entropy, April 2016.
\newblock Blog post, available at \url{http://matroidunion.org/?p=1675}
  [accessed 18 November 2021].

\bibitem[PvdP17]{PendavinghVanderpol2017}
Rudi Pendavingh and Jorn van~der Pol.
\newblock Enumerating matroids of fixed rank.
\newblock {\em Electronic Journal of Combinatorics}, 24(1):P2.51, 2017.

\bibitem[PvdP18]{PendavinghVanderpol2018}
Rudi Pendavingh and Jorn van~der Pol.
\newblock On the number of bases of almost all matroids.
\newblock {\em Combinatorica}, 38(4):955--985, 2018.

\bibitem[RS78]{RuzsaSzemeredi1978}
I.Z. Ruzsa and E.~Szemer\'edi.
\newblock Triple systems with no six points carrying three triangles.
\newblock In A.~Hajnal and V.T. S\'os, editors, {\em Combinatorics (Keszthely,
  1976), Vol. II}, volume~18. Colloquium of the Mathematical Society J\'anos
  Bolyai, 1978.

\bibitem[vdP20]{Vanderpol2020}
Jorn van~der Pol.
\newblock Almost every sparse paving matroid contains ${M}({K}_4)$ or ${W}^3$
  as a minor, August 2020.
\newblock Blog post, available at \url{http://matroidunion.org/?p=2750}
  [accessed 18 November 2021].

\end{thebibliography}
\end{document}